\documentclass[12pt]{article}

\usepackage[english]{babel}
\usepackage[utf8]{inputenc}
\usepackage[T1]{fontenc}
\usepackage{amsmath}
\usepackage{amssymb}
\usepackage{amsthm}
\usepackage{cite}
\usepackage{caption}

\usepackage{tikz}

\tikzstyle{sommetb}=[circle,draw,scale=0.5,fill=black]
\tikzstyle{sommeti}=[circle,draw,scale=0.3,fill=black]

\title{Lower bounds for the first eigenvalue of the Steklov problem on graphs}
\author{Hélène Perrin}
\date{}

\newtheorem{defi}{Definition}
\newtheorem{note}{Remark}
\newtheorem{lem}{Lemma}
\newtheorem{prop}{Proposition}
\newtheorem{exa}{Example}
\newtheorem{thm}{Theorem}

\begin{document}
\maketitle

\begin{abstract}
We give lower bounds for the first non-zero Steklov eigenvalue on connected graphs. These bounds depend on the extrinsic diameter of the boundary and not on the diameter of the graph. We obtain a lower bound which is sharp when the cardinal of the boundary is $2$, and asymptotically sharp as the diameter of the boundary tends to infinity in the other cases. We also investigate the case of weigthed graphs and compare our result to the Cheeger inequality.
\end{abstract}

\section{Introduction}
The Steklov problem on a compact Riemannian manifold $M$ with boundary $\partial M$ is known to be
\[
\begin{cases}
\triangle u =0&\text{in } M\\
\frac{\partial u}{\partial n}=\sigma u &\text{on } \delta M
\end{cases}
\]
where $\triangle$ is the Laplace-Beltrami operator and $\frac{\partial u}{\partial n}$ is the outward normal derivative along the boundary $\delta M$. It is a classical result that if the boundary is sufficiently regular, the spectrum of the Steklov problem is discrete and its eigenvalues form a sequence $0=\sigma_0<\sigma_1\leq\sigma_2\leq \dots \nearrow \infty$. It is also known that this spectrum coincides with the one of the Dirichlet-to-Neumann operator.

The Steklov problem has been pointed out to be of particular interest for spectral geometry because it has features that distinguish it from the Dirichlet and Neumann problems for the Laplacian (see \cite{gpsurvey}). For the Laplacian, it is classical to try to understand spectral properties of a manifold via a discretization (see e.g. \cite{ChavIneq} and \cite{mantuano2005discretization}). The eigenvalues of the Laplacian on $M$ are related to those of its discretization and the discrete Laplacian is studied in order to get information about the spectral geometry of $M$. This approach has been recently done for the Steklov operator in \cite{colbois2016steklov}. In consequence, it becomes very relevant to study the spectrum of the discrete Steklov operator.

The object of this paper is to study lower bounds for the first non-zero eigenvalue of the Steklov problem on graphs. The study of the spectral geometry associated to this eigenvalue has been initiated in a recent article by Hua, Huang and Wang \cite{hua2017}. They define two Cheeger-type constants for the Steklov problem on graphs, based on the constants by Jammes and Escobar already existing for the continuous case, and give two very interesting lower bound estimates depending on these constants. In the case of the so called Jammes-type Cheeger estimate, there is not a Buser-type inequality as for the combinatorial Laplacian, that is, often, the Jammes-type Cheeger constant is small while the first eigenvalue is not (a typical example of this phenomenon is given in Example \ref{D_{n+3}}).

In order to better understand the geometry of the graph captured by $\sigma_1$, we investigate a different lower bound, depending on geometric features of the boundary. In particular, it depends on the extrinsic diameter of the boundary, but the diameter of the whole graph is not involved. It is sharp when there are only two vertices in the boundary and, otherwise, it becomes asymptotically sharp as the diameter of the boundary goes to infinity. Our bound is, in some cases, more accurate than the Jammes-type Cheeger estimate of Hua et al. We draw here the reader's attention to the fact that, while Hua et al. are considering weighted graphs, we focus in this paper on graphs with weight one on all the edges, which are relevant in the context of discretization. The definition of the Steklov problem, or Dirichlet-to-Neumann operator, is also slightly different. In analogy to the Laplacian, we could say that we use a non-normalized form of the Steklov problem on graphs (for a discussion about the several definitions of the combinatorial Laplacian, see e.g. \cite{ButlerPHD}). However, in the examples that we give for comparison, the two definitions coincide and in our last section we will adapt a part of our result to the settings of the article by Hua et al.

The paper is structured as follows: in section 2, we give the definitions necesary to state the Steklov problem on graphs; the main lower bound results, Theorem \ref{thm1} and Theorem \ref{thm2}, are given in section 3, respectively in part I and II of this section; in the last section, we extend the first result to weighted graphs. Theorem \ref{thm1} can be deduced from Theorem \ref{thm2}, but we begin with it since the proof is very simple and we use it as a starting point for the more technical proof of Theorem \ref{thm2}.

This article resulted from a master's thesis realized in 2017 at the University of Neuchâtel under the direction of the Professor Bruno Colbois.

\section{Preliminaries}

In a graph $\Gamma=(V,E)$, for a subset $S\subset V$, the boundary of $S$ in $G$ is defined in the following way. The edge boundary of $S$ is $\partial S:=E(S,S^c)$, where $E(\Omega_1$, $\Omega_2)$ is the set of edges between the two subsets $\Omega_1$, $\Omega_2\subset V$, i.e. the set $\{e=\{i,j\}\in E|i\in \Omega_1\text{, }j\in\Omega_2\}$. The vertex boundary of $S$ is $\delta S:=\{i\in S^c|i\sim j \text{ for some } j \in S\}$. We use $i\sim j$ to signify that $\{i,j\}\in E$. The degree of a vertex $i$ is denoted $d(i)$.

If we forget the vertices that are not in in $S\cup \delta S$, and the edges that have no endpoint in $S$, we get what we call a graph with boundary.

\begin{defi}
A graph with boundary is a pair $(\Gamma, B)$, where $\Gamma=(V,E)$ is a simple graph, that is, without loops and multiple edges, and $B\subset V$ is a subset of $V$ such that $\delta( B^c)=B$ and $E(B,B)=\emptyset$. We call $B$ the boundary and $B^c:=I$ the interior of the graph.
\end{defi}

\begin{note} If $\Gamma=(V,E)$ is a simple graph and $\Omega$ a subset of $V$, then $\widetilde{\Omega}:=(\bar{\Omega},E(\Omega,\bar{\Omega}))$ where $\bar{\Omega}:=\Omega\cup\delta\Omega$ is a graph with boundary according to our definition.
\end{note}

In this paper, we always consider graphs with boundary, connected, and finite. The space of all real functions defined on the vertices $V$, denoted $\mathbb{R}^V$, is the Euclidean space of dimension $|V|$. Similarly, the space of real functions defined on the vertices of the boundary, denoted $\mathbb{R}^{B}$, is the Euclidean space of dimension $|B|$. The scalar product in an Euclidean space is denoted by $\langle\cdot,\cdot\rangle$. In addition to the scalar product, we introduce on $\mathbb{R}^V$ the following bilinear form: for functions $v,w\in \mathbb{R}^V$, we define $\langle v,w\rangle_B:=\langle v|_{B}, w|_{B}\rangle$, we write $\|v\|_B$ for $\langle v,v\rangle_B^{1/2}$, and $v\perp_B w$ if $\langle v,w\rangle_B=0$.

The Laplacian matrix of a graph $\Gamma$, denoted $\triangle$, is
\[(\triangle)_{ij}=
\begin{cases}
d(i) &\text{if }i=j\\
-1 &\text{if i and j are adjacent}\\
0 &\text{otherwise}
\end{cases}
\]
A function $v=(v_1,...,v_{|V|})$ on $(\Gamma,B)$ is called harmonic if
\[
(\triangle v)_i=\sum_{j\sim i}(v_i-v_j)=0\quad \forall i \in I
\]
The normal derivative of $v$, denoted $\frac{\partial v}{\partial n}$, satisfies
\[(\frac{\partial v}{\partial n})_i=\sum_{j\in I,j\sim i}(v_i-v_j) \quad i \in B\]

Since $E(B,B)=\emptyset$, we have $(\frac{\partial v}{\partial n})_i=\sum_{j\in I,j\sim i}(v_i-v_j)=(\triangle v)_i$ for $i\in B$. In analogy to the Riemannian case, we give the following definition:

\begin{defi}The Steklov problem on a graph with boundary is
\[(\triangle v)_i=
\begin{cases}
0 &\text{if }i\in I\\
\sigma v_i &\text{if }i \in B\\
\end{cases}
\]
\end{defi}

The eigenvalues of this problem are the same as the eigenvalues of the discrete Dirichlet-to-Neumann operator defined by \cite{hua2017} in the case that the weights of the edges are all $1$ and the degree of the boundary vertices is $1$. In order to see this, we need this useful linear algebra lemma:

\begin{lem} For a connected graph with boundary $(\Gamma,B)$, given any $\varphi \in \mathbb{R}^{B}$, there is a unique function $\tilde{\varphi}\in\mathbb{R}^V$, called the harmonic extension of $\varphi$, which satisfies
\begin{align}
(\triangle\tilde{\varphi})_i=0 \qquad \text{if } i\in I\\
\tilde{\varphi}_i=v_i \qquad \text{if } i\in B
\end{align}
\end{lem}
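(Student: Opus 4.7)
The plan is to recast the statement as the invertibility of a square linear system and reduce it to a uniqueness question handled by a discrete Green identity. Identifying $\mathbb{R}^V \cong \mathbb{R}^I \oplus \mathbb{R}^{B}$ and fixing $\varphi$ on $B$, the unknown is $\tilde\varphi|_I \in \mathbb{R}^I$, and the $|I|$ harmonicity equations $(\triangle\tilde\varphi)_i=0$ for $i\in I$ yield $|I|$ linear equations in these $|I|$ unknowns. So the associated linear map $\mathbb{R}^I \to \mathbb{R}^I$ (the $I\times I$ principal submatrix of $\triangle$, essentially) is square, and by rank--nullity in finite dimension, existence and uniqueness follow once I establish injectivity: any $v\in\mathbb{R}^V$ that is harmonic on $I$ and vanishes on $B$ must be identically zero.

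To prove this uniqueness, the key tool is the Green-type identity
\[
\sum_{i\in V} v_i (\triangle v)_i \;=\; \sum_{\{i,j\}\in E} (v_i-v_j)^2,
\]
which one obtains by writing $v_i(\triangle v)_i = \sum_{j\sim i} v_i(v_i - v_j)$ and grouping ordered pairs $(i,j)$, $(j,i)$ into a single unordered edge $\{i,j\}$ contributing $(v_i-v_j)^2$. Under the hypotheses, every term of the left-hand side vanishes: the $i\in I$ contributions by harmonicity, and the $i\in B$ contributions because $v_i=0$ there. Hence the right-hand side vanishes too, forcing $v_i=v_j$ across every edge. Connectedness of $\Gamma$ then promotes this to $v$ being globally constant, and the value $0$ imposed on the (nonempty) boundary $B$ gives $v\equiv 0$, as required.

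I do not expect a serious obstacle here: once the Green identity is in hand the argument is immediate, and existence is then a purely formal consequence of injectivity of a square operator in finite dimension. The two points that deserve a brief check are the sign and symmetrization in the Green identity, and the tacit assumption $B\neq\emptyset$ (otherwise constants would spoil uniqueness), which is built into the paper's setting via $\delta(B^c)=B$ and the Steklov framework.
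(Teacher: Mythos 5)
Your argument is correct and follows essentially the same reduction as the paper: both recast the problem as the invertibility of the square principal submatrix $[\triangle]_I$ acting on the unknowns $\tilde\varphi|_I\in\mathbb{R}^I$. The only difference is that the paper simply asserts that connectedness makes this matrix invertible, whereas you actually justify it, using the Dirichlet-form identity $\sum_{i\in V} v_i(\triangle v)_i=\sum_{\{i,j\}\in E}(v_i-v_j)^2$ together with rank--nullity (and correctly noting that $B\neq\emptyset$ is needed to rule out nonzero constants), so your write-up is more self-contained on the one step the paper leaves unproved.
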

\begin{proof}
Putting $(2)$ in $(1)$ and developing, we obtain the following linear system of equations
\[
([\triangle]_I )\tilde{\varphi}{\big|_I})_i=\sum_{j \sim i, j\in B}\varphi_j\quad\forall i\in I
\]
where $[\triangle]_I$ denotes the principal minor of $\triangle$ with rows and columns corresponding to the vertices of the interior. If the graph is connected, this matrix is invertible and thus there is a unique solution of the system.
\end{proof}

The Dirichlet-to-Neumann operator $\Lambda:\mathbb{R}^{B}\rightarrow\mathbb{R}^{B}$ maps $\varphi$ to $\Lambda \varphi:=\frac{\partial \tilde{\varphi}}{\partial n}$. Let $\sigma$ be an eigenvalue of the DtN operator and $\varphi$ an associated eigenfunction, then it is clear that the pair $(\sigma,\tilde{\varphi})$ is solution of the Steklov problem. In the other direction, if $(\sigma,v)$ is solution of the Steklov problem, it is also clear that $\sigma$ is an eigenvalue of the DtN operator, asociated to $v|_{B})$. Thus, the spectra are equivalent.

This problem has $b$ solutions $\sigma_0\leq\sigma_1\leq...\leq\sigma_{b-1}$, called the eigenvalues, and there exist $b$ associated eigenfunctions $v^0,v^1,...,v^{b-1}$ which are mutually orthogonal for the bilinear form $\langle \cdot,\cdot\rangle_B$ and can be choosen such that $\|v^i\|_B=1$. This affirmation results from the fact that $\Lambda$ is a nonegative self-adjoint operator as explained in \cite{hua2017}. A proof using basic analysis and linear algebra tools is also given in \cite{PerrinH}. The Rayleigh quotient associated to $\Lambda$ is, for a function $v\in \mathbb{R}^V$,
\[
\frac{\langle v,\triangle v\rangle}{\left\|v\right\|_B^2}=\frac{\sum_{i\sim j}(v_i-v_j)^2}{\sum_{i\in B}v_i^2}
\]
and there are variational characterizations for the eigenvalues:
\begin{align}
\sigma_j=\min_E \max_{v\in E,v\not =0}\left\{\frac{\langle v,\triangle v\rangle}{\left\|v\right\|_B^2}\right\}
\label{sigmak}
\end{align}
where $E$ is the set of all linear subspaces of $\mathbb{R}^V$ of dimension $j+1$. For $\sigma_1$, we have
\begin{align}
\sigma_1=\min_{v\in \mathbb{R}^{V},v\in [v^0]^{\perp_B}}\left\{\frac{\langle v,\triangle v\rangle}{\left\|v\right\|_B^2}\right\}
\label{carvar1}
\end{align}
It is easy to see that a constant function is an eigenfunction associated to the eigenvalue $0$. This allows us to rewrite equation (\ref{carvar1})
\begin{align}
\sigma_1=\min_{v\in \mathbb{R}^{V}}\left\{\sum_{i\sim j}(v_i-v_j)^2, \sum_{i\in B}v_i^2=1, \sum_{i\in B}v_i=0\right\}
\label{carvar}
\end{align}

\begin{note}
\label{comp}
From equation (\ref{sigmak}), we see that if $(\Gamma,B)$ is a graph with boundary, then $\sigma_k(\Gamma,B)\geq \lambda_k(\Gamma)$ where $\lambda_k(\Gamma)$ is the $k$th-eigenvalue of the combinatorial Laplacian on $\Gamma$.
\end{note}

\section{Lower bound for $\sigma_1$}

Let $(\Gamma,B)$ be a graph with boundary. The distance between two vertices $i$ and $j$ is the number of edges in the shortest path joining $i$ and $j$. We will denote it $d(i,j)$.

\begin{defi} The diameter $d$ of $(\Gamma,B)$ is the maximum distance between any two vertices of $(\Gamma,B)$, i.e. $d=\max\{d(i,j)|i,j\in V\}$.

\end{defi}

\begin{defi} The diameter of the boundary $d_{B}$ is the extrinsic diameter of $B$ in $\Gamma$ or, in other words, it is the maximum distance between any two vertices of $B$, i.e. $d_{B}=\max\{d(i,j)|i,j\in B\}$.
\end{defi}

\subsection{Lower bound for $\sigma_1$ (I)}
We give now a first lower bound in terms of the diameter of the boundary and of the number of vertices of the boundary.

\begin{thm} Let $(\Gamma,B)$ be a connected graph with diameter of the boundary $d_{B}$ and $|B|=b$. We have
\[
\sigma_1\geq \frac{b}{(b-1)^2\cdot d_{B}}
\]
The bound is optimal when $b=2$.
\label{thm1}
\end{thm}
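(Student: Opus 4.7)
The plan is to work directly from the variational characterization (\ref{carvar}) applied to a first eigenfunction $v$ satisfying $\sum_{i \in B} v_i = 0$ and $\sum_{i \in B} v_i^2 = 1$, combined with a path argument to relate boundary values to the Dirichlet sum $\sum_{i \sim j}(v_i - v_j)^2 = \sigma_1$.

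First I would locate a vertex $i_0 \in B$ at which $|v_{i_0}|$ is maximal; after possibly replacing $v$ by $-v$ we may assume $v_{i_0} \geq 0$, and since $b\, v_{i_0}^2 \geq \sum_{i \in B} v_i^2 = 1$ we have $v_{i_0}^2 \geq 1/b$. Then I would use the orthogonality to constants: since $\sum_{j \in B \setminus \{i_0\}} v_j = -v_{i_0}$, the average of $v$ over the remaining $b-1$ boundary vertices is $-v_{i_0}/(b-1)$, so there exists $j_0 \in B \setminus \{i_0\}$ with $v_{j_0} \leq -v_{i_0}/(b-1)$. Combining these two facts,
\[
v_{i_0} - v_{j_0} \;\geq\; v_{i_0} \cdot \frac{b}{b-1}, \qquad (v_{i_0} - v_{j_0})^2 \;\geq\; \frac{b^2}{(b-1)^2}\, v_{i_0}^2 \;\geq\; \frac{b}{(b-1)^2}.
\]

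Next I would fix a shortest path $i_0 = x_0, x_1, \dots, x_d = j_0$ in $\Gamma$, where $d = d(i_0, j_0) \leq d_B$. By a telescoping sum and Cauchy--Schwarz,
\[
(v_{i_0} - v_{j_0})^2 \;=\; \Bigl(\sum_{s=0}^{d-1}(v_{x_s} - v_{x_{s+1}})\Bigr)^{\!2} \;\leq\; d \sum_{s=0}^{d-1}(v_{x_s} - v_{x_{s+1}})^2 \;\leq\; d_B \sum_{i \sim j}(v_i - v_j)^2.
\]
Putting the two inequalities together yields
\[
\sigma_1 \;=\; \sum_{i \sim j}(v_i - v_j)^2 \;\geq\; \frac{(v_{i_0} - v_{j_0})^2}{d_B} \;\geq\; \frac{b}{(b-1)^2 d_B},
\]
which is the claim. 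For the sharpness assertion at $b=2$ I would exhibit a path graph whose two endpoints form the boundary: the Steklov eigenfunction is affine along the path, and a direct Rayleigh-quotient computation gives $\sigma_1 = 2/d_B$, matching the bound.

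The main technical point is really the choice of $j_0$: the weak inequality $v_{j_0} \leq -v_{i_0}/(b-1)$ is what produces the factor $b/(b-1)^2$ after squaring and applying $v_{i_0}^2 \geq 1/b$. The Cauchy--Schwarz step along the path is routine once one notices that only the extrinsic distance between the two selected boundary vertices is needed, so the diameter of $\Gamma$ never enters. I do not anticipate a substantial obstacle; the argument is short, and the only subtlety is making sure the selections $i_0, j_0$ together with the normalizations of $v$ interact correctly to produce precisely the exponent $(b-1)^2$ in the denominator.
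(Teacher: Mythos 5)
Your proposal is correct and follows essentially the same route as the paper: normalize the eigenfunction via (\ref{carvar}), extract a boundary vertex with $v_{i_0}^2\geq 1/b$ and another with $v_{j_0}\leq -v_{i_0}/(b-1)$ from the two constraints, and apply Cauchy--Schwarz along a shortest path between them of length at most $d_B$, with the path graph giving sharpness for $b=2$. The only cosmetic difference is that you select $j_0$ by averaging over $B\setminus\{i_0\}$ while the paper takes the global minimum $\beta$ and bounds it the same way.
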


\begin{proof} Suppose $v$ is an eigenfunction achieving $\sigma_1$ in (\ref{carvar}).
Let $\alpha$ be the vertex such that $|v_\alpha|=\max_{i}|v_i|$. We can suppose that $v_\alpha$ is positive because otherwise, we can take $-v$ that also achieves $\sigma_1$ in (\ref{carvar})
and it will be the case. Let also $\beta$ be the vertex such that $v_\beta=\min_{i}v_i$.
From $\sum_{i\in B}v_i^2=1$, we get
\[
1\leq\sum_{i\in B}v_\alpha^2\Rightarrow v_\alpha\geq\frac{1}{\sqrt{b}}
\]
and from $\sum_{i\in B}v_i=0$, we get

\[
-\frac{1}{\sqrt{b}}\geq -v_\alpha=\sum_{i\in B, i\not =\alpha}v_i\geq(b-1)v_\beta
\Rightarrow v_\beta\leq-\frac{1}{(b-1)\sqrt{b}}
\]
so finally we have
\[
v_\alpha-v_\beta \geq \frac{b}{(b-1)\sqrt{b}}
\]

Since the graph is connected and the diameter of the boundary is $d_{B}$, there exists a path of length $c\leq d_{B}$ joining $\alpha$ and $\beta$. We label the $c+1$ vertices of the path by $1,...,c+1$, with $1=\alpha$ and $c+1=\beta$. Using Cauchy-Schwarz inequality, we obtain

\begin{align*}
\sigma_1&=\sum_{i\sim j}(v_i-v_j)^2\geq \sum_{i=1}^{c}(v_i-v_{i+1})^2\\
&\geq \frac{(v_\alpha-v_\beta)^2}{c}\geq \frac{1}{d_{B}}(\frac{b}{(b-1)\sqrt{b}})^2\\
&= \frac{b}{(b-1)^2\cdot d_{B}}
\end{align*}

Furthermore, using (\ref{carvar}), it is easy to compute that $\frac{2}{d_B}$ is the first non-zero eigenvalue of the graph $(P_n,B_{P_n})$ where $P_n$ is the path of length $n$ and $B_{P_n}$ the two extremities of the path. So the bound is optimal when $b=2$.
\end{proof}

\begin{note} This result is analog to Lemma 1.9. for the combinatorial normalized Laplacian in \cite{Chung95}.
\end{note}

As we will see in the following examples, this lower bound reflects other aspects of the geometrical meaning of $\sigma_1$ than the one given by Theorem 1.3 of \cite{hua2017}. This is due to the fact that the diameter of the boundary does not depend on the total number of vertices of the graph.

\begin{exa} We consider the family of graphs $\{(D_{n+3}, B_{D_{n+3}})\}_{n\in\mathbb{N}}$ as shown in Figure \ref{D_{n+3}}, which have two boundary vertices (the two bigger vertices).

\begin{figure}[h]
\begin{center}
\begin{tikzpicture}
\draw(0,0)node[sommetb]{};
\draw(0,-1)node[sommeti]{};
\draw(0,-2)node[sommetb]{};
\draw(1,-1)node[sommeti]{};
\draw(2,-1)node[sommeti]{};
\draw(4,-1)node[sommeti]{};
\draw(5,-1)node[sommeti]{};
\draw (0,0) -- (0,-1)--(0,-2);
\draw (0,-1)--(1,-1)--(2,-1);
\draw (4,-1)--(5,-1);
\draw (2,-1)--(2.5,-1);
\draw (3.5,-1)--(4,-1);
\draw[dashed] (2.5,-1)--(4,-1);
\end{tikzpicture}
\end{center}
\caption{}
\label{D_{n+3}}
\end{figure}
If we extend an eigenfunction of $(P_2,B_{P_2})$ associated to $\sigma_1$ to a function on $(D_{n+3},B_{D_{n+3}})$ by giving the value of the interior vertex of $(P_2,B_{P_2})$ to all other vertices of the interior, the Rayleigh quotient remains unchanged and it is clear that $\sigma_1(D_{n+3},B_{D_{n+3}})=\frac{2}{d_B}$. We note here that the minimizer for $\sigma_1$ if the number of vertices of the boundary and the diameter of the boundary are fixed is not unique.

By computation, we obtain that for this family of graphs, the lower bound of Theorem 1.3 in \cite{hua2017} tends to $0$ as $n$ goes to infinity. So, in this case where the diameter is unbounded but the diameter of the boundary is bounded, the Jammes-type Cheeger estimate fails to see that $\sigma_1$ is bounded.
\label{exT}
\end{exa}

The contrary happens in the next example:

\begin{exa} Let $\{\Gamma_n\}_{n\in\mathbb{N}}$ be a family of expanders (on expanders, see e.g. \cite{Chung95}). By choosing a pair of vertices $B_{\Gamma_n}:=\{i,j\}$ in $(\Gamma_n)$ that are at distance $n$, we obtain the family of graphs with boundary $(\Gamma_n,B_{\Gamma_n})_{n\in\mathbb{N}}$.
From note \ref{comp}, we deduce that $\sigma_1$ is bounded below. On this example, the Cheeger constant is better than our bound which tends to $0$ as $n$ goes to infinity.
\end{exa}

\subsection{Lower bound for $\sigma_1$ (II)}

In this part, we prove the following improvement of Theorem \ref{thm1}:


\begin{thm}
Let $(\Gamma,B)$ be a connected graph with diameter of the boundary $d_{B}$ and $|B|=b$. We have
\begin{equation}
\sigma_1\geq \frac{b}{\left\lfloor \frac{b}{2}\right\rfloor \left\lceil \frac{b}{2}\right\rceil\cdot d_B}
\label{eq1}
\end{equation}
Moreover, the bound is sharp for any $b$ as $d_B\rightarrow \infty$.
\label{thm2}
\end{thm}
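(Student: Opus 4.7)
The plan is to preserve the path-and-Cauchy--Schwarz mechanism of Theorem \ref{thm1} and sharpen only the pointwise estimate on $v_\alpha - v_\beta$. Let $v$ realize the minimum in (\ref{carvar}), and let $\alpha,\beta\in B$ be vertices where $v|_B$ attains its maximum $M=v_\alpha$ and minimum $m=v_\beta$; as in Theorem \ref{thm1}, Cauchy--Schwarz along a shortest path in $\Gamma$ from $\alpha$ to $\beta$ (of length at most $d_B$) gives $\sigma_1\geq (M-m)^2/d_B$. So the whole improvement reduces to proving the sharp inequality
\[
(M-m)^2\ \geq\ \frac{b}{\lfloor b/2\rfloor\lceil b/2\rceil}.
\]

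This one-dimensional inequality is the main obstacle. My approach is to split $B$ by sign: set $P=\{i\in B:v_i>0\}$, $N=B\setminus P$, $p=|P|$, and $S=\sum_{i\in P}v_i=-\sum_{i\in N}v_i$. Since $\sum_B v_i=0$ and $\sum_B v_i^2=1$, we must have $1\leq p\leq b-1$ and $S>0$. Cauchy--Schwarz on each piece gives $\sum_P v_i^2\geq S^2/p$ and $\sum_N v_i^2\geq S^2/(b-p)$; summing and using $\sum_B v_i^2=1$ yields the quadratic upper bound
\[
S^2\ \leq\ \frac{p(b-p)}{b}.
\]
Separately, the extremality of $M,m$ gives $\sum_P v_i^2\leq MS$ (from $v_i^2\leq Mv_i$ on $P$) and $\sum_N v_i^2\leq(-m)S$ (from $v_i^2\leq(-m)(-v_i)$ on $N$), so adding the two gives the linear lower bound $1\leq(M-m)S$, i.e.\ $S\geq 1/(M-m)$. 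Squaring the latter and combining with the former yields $(M-m)^2\geq b/[p(b-p)]$, and since $p$ is an integer in $\{1,\dots,b-1\}$, $p(b-p)\leq \lfloor b/2\rfloor\lceil b/2\rceil$, which closes the estimate. The crucial trick is to use the same one-sided mass $S$ in two opposing estimates: a quadratic lower bound via Cauchy--Schwarz against a linear upper bound via extremality of $M,m$.

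For the sharpness claim as $d_B\to\infty$, the equality analysis above identifies the boundary profile to mimic: $\lceil b/2\rceil$ vertices at one value and $\lfloor b/2\rfloor$ at another. Accordingly, take a path $x_0x_1\cdots x_{d_B-2}$ of length $d_B-2$ and attach $\lceil b/2\rceil$ pendant boundary vertices at $x_0$ and $\lfloor b/2\rfloor$ pendants at $x_{d_B-2}$; the extrinsic boundary diameter is then exactly $d_B$. Using as a test function the one equal to $\lfloor b/2\rfloor$ on the first pendant group (and at $x_0$), equal to $-\lceil b/2\rceil$ on the second (and at $x_{d_B-2}$), and linearly interpolated along the interior path, one checks $\sum_B v_i=0$, $\sum_B v_i^2=b\lfloor b/2\rfloor\lceil b/2\rceil$, and Dirichlet energy $b^2/(d_B-2)$ concentrated on the path edges (pendant edges carry zero difference). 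The resulting Rayleigh quotient is $b/[\lfloor b/2\rfloor\lceil b/2\rceil(d_B-2)]$, which converges to the theorem's lower bound as $d_B\to\infty$.
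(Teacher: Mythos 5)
Your proof is correct, and the heart of it --- the sharp estimate $(M-m)^2\geq b/\bigl(\lfloor b/2\rfloor\lceil b/2\rceil\bigr)$ for a boundary function with $\sum_B v_i=0$, $\sum_B v_i^2=1$ --- is obtained by a genuinely different and more elementary route than the paper's. The paper isolates this estimate as a constrained optimization problem (Proposition \ref{prop1}) and proves it by induction on $b$, stratifying the simplex of ordered vectors by equality patterns and running Lagrange multipliers on each stratum to force a zero coordinate and descend to $b-1$. You instead split $B$ by sign, and play two uses of the one-sided mass $S=\sum_{v_i>0}v_i$ against each other: Cauchy--Schwarz on each sign class gives $S^2\leq p(b-p)/b$, while extremality of $M$ and $m$ gives $(M-m)S\geq 1$; combining these yields $(M-m)^2\geq b/(p(b-p))\geq b/\bigl(\lfloor b/2\rfloor\lceil b/2\rceil\bigr)$ since $p$ is an integer. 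This is shorter, avoids the regularity and case analysis of the Lagrange argument, and even gives the slightly finer bound $b/(p(b-p))$ in terms of the number $p$ of positive boundary values; what it does not directly give is the identification of all minimizers, which the paper's stratified analysis provides and which motivates the extremal examples. For sharpness you use essentially the same family as the paper's $(H^b)_{d_B}$ (pendant boundary vertices split $\lceil b/2\rceil$/$\lfloor b/2\rfloor$ at the two ends of a path), but you content yourself with the test-function upper bound $b/\bigl(\lfloor b/2\rfloor\lceil b/2\rceil(d_B-2)\bigr)$ rather than computing $\sigma_1$ exactly as the paper does; since this upper bound and the theorem's lower bound have ratio $d_B/(d_B-2)\to 1$, that is entirely sufficient for the asymptotic sharpness claim.
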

We first prove (\ref{eq1}) and secondly the sharpness result. 
\begin{proof}[Proof of (\ref{eq1})]
In the proof of Theorem \ref{thm1}, we approximate the difference between the largest and the smallest values on the boundary of an eigenfunction for $\sigma_1$ using the following bound: if $v_{\alpha}$ and $v_{\beta}$ are respectively the largest and the smallest value on the boundary of an eigenfunction achieving $\sigma_1$ in (\ref{carvar}), we have $v_{\alpha}-v_{\beta}\geq \frac{b}{\sqrt{b}(b-1)}$. However, this bound is not sharp as soon as $b>2$. We improve it by solving a constrained optimization problem.

Consider the map $f_b: \mathbb{R}^b\rightarrow \mathbb{R}: (x_1,...,x_b) \mapsto x_1-x_b$. Define $D_b:=\{x \in \mathbb{R}^b: x_1\geq x_2\geq ...\geq x_b\}$ and $S_b:=\{\sum_{i=1}^{b}x_i^2=1\}\cap\{\sum_{i=1}^{b}x_i=0\}$.

\begin{prop} Let $f_b$, $D_b$, and $S_b$ be as defined above, then
\[
\min_{x \in D_b\cap S_b}f_b(x)=\frac{\sqrt{b}}{\sqrt{\left\lfloor{\frac{b}{2}}\right\rfloor}\sqrt{\left\lceil{\frac{b}{2}}\right\rceil}}
\]
\label{prop1}
\end{prop}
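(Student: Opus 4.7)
My plan is to prove the two directions separately. For the \emph{upper bound} (attainability), the natural candidate is a two-valued configuration: for an integer $k\in\{1,\ldots,b-1\}$, take $x$ with $k$ coordinates equal to some $\alpha>0$ and $b-k$ equal to some $\gamma<0$, ordered decreasingly so $x\in D_b$. The two constraints defining $S_b$ pin down $(\alpha,\gamma)$ uniquely and give $\alpha-\gamma=\sqrt{b/(k(b-k))}$ by a short calculation. This is minimized over integer $k$ at $k=\lfloor b/2\rfloor$, yielding exactly the target value.

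For the \emph{lower bound}, my approach is a sign-partition argument. Given $x\in D_b\cap S_b$, let $P=\{i:x_i>0\}$, $N=\{i:x_i<0\}$, with $p=|P|$, $n=|N|\ge 1$ (both nonempty because $\sum x_i=0$ and $\sum x_i^2=1$). The key step is a pair of algebraic inequalities: for $i\in P$, $0<x_i\le x_1$ gives $x_i^2\le x_1 x_i$; symmetrically, for $i\in N$, $x_b\le x_i<0$ gives $x_i^2\le x_b x_i$. Setting $S:=\sum_P x_i=-\sum_N x_i$ (from $\sum x_i=0$) and summing (zero entries contribute nothing),
\[
1=\sum_i x_i^2 \le x_1 S - x_b S = (x_1-x_b)\,S,
\]
so $f_b(x)\ge 1/S$. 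Next, Cauchy--Schwarz on each half gives $S^2\le p\sum_P x_i^2$ and $S^2\le n\sum_N x_i^2$, which together force $S^2\bigl(1/p+1/n\bigr)\le 1$, i.e., $S^2\le pn/(p+n)$. Hence $f_b(x)^2\ge (p+n)/(pn)$.

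The remaining step is combinatorial: I need $(p+n)/(pn)\ge b/(\lfloor b/2\rfloor\lceil b/2\rceil)$ for integers $p,n\ge 1$ with $p+n\le b$. This follows from the inequality $pn\le\lfloor(p+n)/2\rfloor\lceil(p+n)/2\rceil$ together with the monotonicity of $s\mapsto s/(\lfloor s/2\rfloor\lceil s/2\rceil)$ on $s\ge 2$, both of which are routine case analyses on parity.

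The main obstacle is getting the constant sharp: the termwise bounds $x_1\ge 1/\sqrt{b}$ and $-x_b\ge 1/((b-1)\sqrt{b})$ used in Theorem~\ref{thm1} lose a factor of order $\sqrt{b-1}$. The sign-partition trick avoids this loss because the inequalities $x_i^2\le x_1 x_i$ on $P$ and $x_i^2\le x_b x_i$ on $N$ become equalities precisely when $x_i\in\{x_1,x_b\}$, matching the two-valued extremizer, so no slack remains at the critical step.
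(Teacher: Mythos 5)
Your proof is correct, and it takes a genuinely different route from the paper. The paper proves the proposition by induction on $b$: it stratifies $D_b$ according to which consecutive coordinates coincide, runs Lagrange multipliers on each stratum intersected with $S_b$, shows that any critical point on a stratum of dimension greater than $2$ must have a vanishing coordinate, and thereby reduces to the $(b-1)$-dimensional problem; the candidate minima all come from the two-valued configurations, exactly as in your upper-bound step. Your lower bound replaces all of this machinery with a short algebraic argument: the inequalities $x_i^2\le x_1x_i$ on the positive part and $x_i^2\le x_bx_i$ on the negative part give $1\le (x_1-x_b)S$ with $S=\sum_{P}x_i$, Cauchy--Schwarz on each half gives $S^2\le pn/(p+n)$, and the remaining optimization over the integers $p,n$ with $p+n\le b$ is elementary (and your monotonicity claim for $s\mapsto s/(\lfloor s/2\rfloor\lceil s/2\rceil)$ checks out, since this equals $1/m+1/m$, resp.\ $1/m+1/(m+1)$, for $s=2m$, resp.\ $s=2m+1$). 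What each approach buys: the paper's stratification-plus-Lagrange method is systematic and locates all critical points, which is reassuring if one wants to classify minimizers or adapt the argument to other objective functions on $D_b\cap S_b$; your argument is shorter, needs no calculus or induction, and makes the source of sharpness transparent (equality forces each $x_i$ to equal $x_1$ or $x_b$, recovering the two-valued extremizer). Your version would be a clean substitute for the paper's Case 1--3 analysis.
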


Now the proof of (\ref{eq1}) is immediate by replacing in the proof of Theorem \ref{thm1} the inequality $v_{\alpha}-v_{\beta}\geq \frac{b}{\sqrt{b}(b-1)}$ by $v_{\alpha}-v_{\beta}\geq\frac{\sqrt{b}}{\sqrt{\left\lfloor{\frac{b}{2}}\right\rfloor}\sqrt{\left\lceil{\frac{b}{2}}\right\rceil}}$.
\end{proof}

\begin{proof}[Proof of Proposition \ref{prop1}] The proof is by induction over the number $b$ of vertices in the boundary and uses the method of Lagrange multipliers.

\begin{paragraph}{Base case} For $b=2$, we have to show that $min_{x \in D_2\cap S_2}f_2(x)=\sqrt{2}$. Since $S_2\cap D_2=\{(\frac{1}{\sqrt{2}},\frac{-1}{\sqrt{2}})\}$, $x=(\frac{1}{\sqrt{2}},\frac{-1}{\sqrt{2}})$ achieves the minimum and it is $\sqrt{2}$.
\end{paragraph}

\begin{paragraph}{Inductive step} We show now that if the result holds for $b-1$, it holds for $b$.

We define $g_b(x)=\sum_{i=1}^{b}x_i^2-1$, $\tilde{g}_b(x)=\sum_{i=1}^{b}x_i$, and $G_b(x)=(g_b(x),\tilde{g}_b(x))$. Therefore, we have $G_b^{-1}(0)=\{\sum_{i=1}^{b}x_i^2=1\}\cap\{\sum_{i=1}^{b}x_i=0\}=S_b$.

We make now a partition of $D_b$. In order to do this, let us consider the set $M:=\{1,2,..,b-1\}$ and $M_j\subset \mathcal P (M)$ the family of sets over $M$ that have cardinality $j$, $M_0=\{\emptyset\}$. 
For a set $m_j$ in the family $M_j$, we define $E_{m_j}:=\{x \in \mathbb{R}^b: x_i>x_{i+1} \text{ if } i \not \in m_j; x_i=x_{i+1} \text{ if } i \in m_j\}$. For instance, if $b=8$ and $m_j=\{1,6,7\}\in M_3$, $E_{\{1,5,6\}}=\{x\in \mathbb{R}^b: x_1=x_2<x_3<x_4<x_5=x_6=x_7<x_8\}$.
We have the following partition of $D_b$
\[
D_b=\bigcup_{j=0}^{b-1}\bigcup_{m_j\in M_j} E_{m_j}
\]

We remark that $E_{m_j}$ is an open subset of the linear subspace of $\mathbb{R}^b$ $\{x \in \mathbb{R}^b: x_i=x_{i+1} \text{ if } i \in m_j\}$ which is of dimension $b-j$. We will say that $E_{m_j}$ is of dimension $b-j$.

On each $E_{m_j}$ intersected with $S_b$, we will look for a minimum of $f_b$. Since the minimum of $f_b$ over $D_b\cap S_b$ must be one of them, we will compare them and find it. We divide these subsets in three categories: the one of dimension $b-j=1$, those of dimension $b-j=2$ and those of dimension $b-j>2$.

\subparagraph{Case 1: $\mathbf{j=b-1}$}
The only subset of our partition of $D_b$ of dimension $1$ is $\{x\in \mathbb{R}^b:x_1=...=x_b\}$ and its intersection with $S_b$ is empty.

\subparagraph{Case 2: $\mathbf{j=b-2}$} Since the sets in the family $M_2$ only contain one element between $1$ and $b-1$, we denote the subsets of dimension $2$ $E_k:=\{x \in \mathbb{R}^b: x_1=...=x_{b-k}>x_{b-k+1}=...=x_b\}$ for $k=1,...,b-1$.

The intersection $E_k\cap S_b$ contains only one vector $y^k=(y_1,...,y_b)$ with $y_i=\frac{k\sqrt{b-k}}{(b-k)\sqrt{b}\sqrt{k}}$ if $1\leq i \leq (b-k)$ and $y_i=\frac{-\sqrt{b-k}}{\sqrt{b}\sqrt{k}}$ if $(b-k)<i\leq b$. Hence, the values possible for the minimum are $f_b(y^k)=\frac{k\sqrt{b-k}}{(b-k)\sqrt{b}\sqrt{k}} -  \frac{-\sqrt{b-k}}{\sqrt{b}\sqrt{k}}=\frac{\sqrt{b}}{\sqrt{b-k}\sqrt{k}}$ for $k=1,...,b-1$.

In order to see for which $k$ the value $f_b(y^k)$ is minimal, we can study the function $h:]0,b[\rightarrow \mathbb{R}^{+}: k \mapsto h(k)= \frac{\sqrt{b}}{\sqrt{b-k}\sqrt{b}}$. We obtain that the better $k$ is $k=\left\lfloor\frac{b}{2}\right\rfloor$ and $f_b(y^{\left\lfloor\frac{b}{2}\right\rfloor})=\frac{\sqrt{b}}{\sqrt{\left\lfloor\frac{b}{2}\right\rfloor}\sqrt{\left\lceil\frac{b}{2}\right\rceil}}$.

\subparagraph{Case 3: $\mathbf{j<b-2}$} 
It remains to investigate if there is a better minimum on the subsets of dimension greater than $2$ intersected with $S_b$. This is the most technical part of the proof. We will show that in this case, if there is a local extremum at $y=(y_1,...,y_b)$, then one of the coefficients of $y$ must be $0$ and this allows us to reduce to a situation with $b-1$ vertices.

Let $m_j\in M_j$ for $j<b-2$ and $E_{m_j}$ be its corresponding subset of $D_b$.

If $j>0$, we number the elements in $m_j$ such that to each $k\in \{1,...,j\}$ corresponds an element i:=n(k) in $m_j$. We define $h_k(x)=(x_{n(k)}-x_{n(k)+1})$ for each $k\in \{1,...,j\}$ and $H(x)=(h_1(x),...,h_j(x))$. We have $\{x \in \mathbb{R}^b: x_i=x_{i+1} \text{ if } i \in m_j\}=H^{-1}(0)$. We note that if $a$ is a minimum of $f_b$ on $E_{m_j}$, it is a relative minimum on $H^{-1}(0)$. Although we write it without indice, the function $H$ depends on the set that we are looking at. We set $K(x):=(G_b(x),H(x))$. Then $K^{-1}(0)=G_b^{-1}(0)\cap H^{-1}(0)$.

If $j=0$, we define $K(x)=G_b(x)$.

We note that $0$ is a regular value of $K$. We define the Lagrangian $L(x,\alpha,\beta)=f_b(x)-\alpha \cdot G_b(x) - \beta\cdot H(x)$. By the Lagrange multipliers theorem, if we have a local extremum of $f_b|_{K^{-1}(0)}$ at $y$, then there exist $\lambda \in \mathbb{R}^2, \mu \in\mathbb{R}^j$ such that $\nabla L(y,\lambda,\mu)=0$. This is a system of $b+2+j$ linear equations.
The first one is
\begin{align*}
&1-2\lambda_1 y_1 -\lambda_2 -\mu_{n(1)}=0& \text{ if }1\in m_j\\
&1-2\lambda_1 y_1 -\lambda_2=0& \text{ if }1\not\in m_j
\end{align*}
The following $b-2$ are for $1<i<b$
\begin{align*}
&-2\lambda_1 y_i -\lambda_2 +\mu_{n(i-1)}=0 & \text{ if } i-1 \in m_j \text{ and } i\not \in m_j\\
&-2\lambda_1 y_i -\lambda_2 +\mu_{n(i-1)} - \mu_{n(i)}=0 & \text{ if } i-1 \in m_j \text{ and } i \in m_j\\
&-2\lambda_1 y_i -\lambda_2 - \mu_{n(i)}=0 & \text{ if } i-1 \not \in m_j \text{ and } i\in m_j\\
&-2\lambda_1 y_i -\lambda_2=0 & \text{ if } i-1 \not \in m_j \text{ and } i\not\in m_j\\
\end{align*}
The $b^{\text{th}}$ equation is
\begin{align*}
&-1-2\lambda_1 y_b -\lambda_2 -\mu_{n(b)}=0& \text{ if }(b-1)\in m_j\\
&-1-2\lambda_1 y_b -\lambda_2=0& \text{ if }(b-1)\not\in m_j
\end{align*}
The two following are
\begin{align*}
\sum_{i=1}^{b}y_i^2=1 \quad \text{and}\quad\sum_{i=1}^{b}y_i=0
\end{align*}
The last $j$ equations are
\begin{align*}
&y_i=y_{i+1}& \text{ if }i \in m_j\\
\end{align*}

We use now that $j<b-2$ and show that if we have a local extremum at $y=(y_1,...,y_b)$, then one of the coefficients of $y$ must be $0$.

By adding the $b$ first equations and using $\sum_{i=1}^{b}x_i=0$, we get $\lambda_2=0$. If $j<b-2$, there exist $r>1$ and $s\geq 1$ such that $y_{r-1}<y_{r}=y_{r+1}=...=y_{r+s-1}<y_{r+s}$. We note that $r>1$ and $r+s-1<b$.
Therefore, by adding the equations $r$ to $r+s-1$, we obtain
\begin{align*}
-2\lambda_1(\sum_{i=r}^{i=r+s-1}y_i)=0 \quad\Rightarrow -2\lambda_1 (s-1)y_{r}=0
\end{align*}
This implies $\lambda_1=0$ or $y_{r}=0$. Let $k$ be the smallest integer $\leq b$ such that $k\not \in m_j$. It exists since $j<b-2$. We have now $y_1=...=y_k<y_{k+1}$. By adding the $k$ first equations, we get
\begin{align*}
&1-2\lambda_1 (\sum_{i=1}^{k}y_i) =0\Rightarrow 1-2\lambda_1 k y_1=0\\
\end{align*}
Therefore, $\lambda_1\not=0\Rightarrow y_{r}=0$.

Let us consider the projection $p: \mathbb{R}^b \rightarrow \mathbb{R}^{b-1}: (x_1,...,x_b) \mapsto (x_1,...,x_{r-1},x_{r+1},...,x_b)$. Since $y_r=0$ for some $r\in \{1,...,b\}$, $f_b(y)=(f_{b-1}\circ p)(y)$. Moreover, $p(y)\in D_{b-1}\cap S_{b-1}$, hence we have 

\[
f_b(y)=f_{b-1}(p(y))\geq \min_{x \in D_{b-1}\cap S_{b-1}}f_{b-1}(x)
\]

This concludes the proof of the proposition because $\min_{x \in D_{b-1}\cap S_{b-1}}f_{b-1}(x)=\frac{\sqrt{b-1}}{\sqrt{\left\lfloor{\frac{b-1}{2}}\right\rfloor}\sqrt{\left\lceil{\frac{b-1}{2}}\right\rceil}}>\frac{\sqrt{b}}{\sqrt{\left\lfloor{\frac{b}{2}}\right\rfloor}\sqrt{\left\lceil{\frac{b}{2}}\right\rceil}}$. So the minimum must be $\frac{\sqrt{b}}{\sqrt{\left\lfloor{\frac{b}{2}}\right\rfloor}\sqrt{\left\lceil{\frac{b}{2}}\right\rceil}}$.
\end{paragraph}
\end{proof}
\begin{proof}[Proof of the sharpness result]
For $b=2$, we already proved in part I that the bound is sharp for all $d_B$.
For $b>2$, we define a family of graphs $\{(H^b)_{d_B}\}_{d_B\in \mathbb{N}}$ such that $\sigma_1((H^b)_{d_B})=\frac{b}{\left\lfloor{\frac{b}{2}}\right\rfloor\left\lceil{\frac{b}{2}}\right\rceil\cdot d_B}+O(\frac{1}{d_B^2})$ as $d_B\rightarrow \infty$.

\begin{defi} [The family of graphs $\{(H^b)_{d_B}\}_{d_B\in \mathbb{N}}$]
If $b$ is even, the family of graphs $\{(H^b)_{d_B}\}_{d_B\in \mathbb{N}}$ is defined as shown in figure \ref{Hpair}, that is, there are $\frac{b}{2}$ boundary vertices on each side of the graph. If $b$ is odd it is defined as shown in figure \ref{Himpair}: there is always a vertex more on one of the two sides. In both cases, the path in the middle increases as $d_B$ increases.

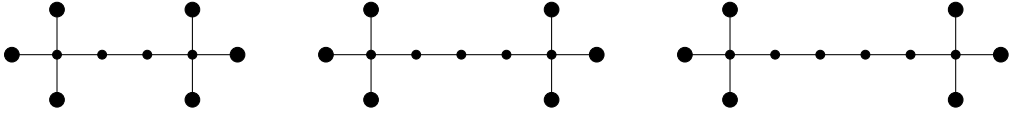
\begin{figure}[h]
\begin{tikzpicture}[scale=0.6]
\draw(1,0)node[sommetb]{};
\draw(0,-1)node[sommetb]{};
\draw(1,-1)node[sommeti]{};
\draw(1,-2)node[sommetb]{};
\draw(2,-1)node[sommeti]{};
\draw(3,-1)node[sommeti]{};
\draw(4,-1)node[sommeti]{};
\draw(5,-1)node[sommetb]{};
\draw(4,0)node[sommetb]{};
\draw(4,-2)node[sommetb]{};
\draw (1,0) -- (1,-1)--(1,-2);
\draw (0,-1)--(1,-1)--(2,-1)--(3,-1)--(4,-1)--(5,-1);
\draw (4,0)--(4,-1)--(4,-2);
\end{tikzpicture}
\quad\quad
\begin{tikzpicture}[scale=0.6]
\draw(0,0)node[sommetb]{};
\draw(-1,-1)node[sommetb]{};
\draw(0,-1)node[sommeti]{};
\draw(0,-2)node[sommetb]{};
\draw(1,-1)node[sommeti]{};
\draw(2,-1)node[sommeti]{};
\draw(3,-1)node[sommeti]{};
\draw(4,-1)node[sommeti]{};
\draw(5,-1)node[sommetb]{};
\draw(4,0)node[sommetb]{};
\draw(4,-2)node[sommetb]{};
\draw (0,0) -- (0,-1)--(0,-2);
\draw (0,-1)--(1,-1)--(2,-1)--(3,-1)--(4,-1)--(5,-1);
\draw (-1,-1)--(0,-1);
\draw (4,0)--(4,-1)--(4,-2);
\end{tikzpicture}
\quad\quad
\begin{tikzpicture}[scale=0.6]
\draw(-1,0)node[sommetb]{};
\draw(-2,-1)node[sommetb]{};
\draw(-1,-1)node[sommeti]{};
\draw(-1,-2)node[sommetb]{};
\draw(0,-1)node[sommeti]{};
\draw(1,-1)node[sommeti]{};
\draw(2,-1)node[sommeti]{};
\draw(3,-1)node[sommeti]{};
\draw(4,-1)node[sommeti]{};
\draw(5,-1)node[sommetb]{};
\draw(4,0)node[sommetb]{};
\draw(4,-2)node[sommetb]{};
\draw (-1,0) -- (-1,-1)--(-1,-2);
\draw (-2,-1)--(-1,-1)--(0,-1)--(1,-1)--(2,-1)--(3,-1)--(4,-1)--(5,-1);
\draw (4,0)--(4,-1)--(4,-2);
\end{tikzpicture}
\caption{$(H^6)_5$, $(H^6)_6$ and $(H^6)_7$}
\label{Hpair}
\end{figure}

\begin{figure}[h]
\begin{tikzpicture}[scale=0.6]
\draw(1,0)node[sommetb]{};
\draw(0,-1)node[sommetb]{};
\draw(1,-2)node[sommetb]{};
\draw(1,-1)node[sommeti]{};
\draw(2,-1)node[sommeti]{};
\draw(3,-1)node[sommeti]{};
\draw(4,-1)node[sommeti]{};
\draw(4.87,-1.5)node[sommetb]{};
\draw(4.87,-0.5)node[sommetb]{};
\draw(4,0)node[sommetb]{};
\draw(4,-2)node[sommetb]{};
\draw (1,0) -- (1,-1)--(1,-2);
\draw (0,-1)--(1,-1)--(2,-1)--(3,-1)--(4,-1);
\draw (4,0)--(4,-1)--(4,-2);
\draw(4.87,-0.5)--(4,-1);
\draw(4.87,-1.5)--(4,-1);
\end{tikzpicture}
\quad\quad
\begin{tikzpicture}[scale=0.6]
\draw(0,0)node[sommetb]{};
\draw(-1,-1)node[sommetb]{};
\draw(0,-1)node[sommeti]{};
\draw(0,-2)node[sommetb]{};
\draw(1,-1)node[sommeti]{};
\draw(2,-1)node[sommeti]{};
\draw(3,-1)node[sommeti]{};
\draw(4,-1)node[sommeti]{};
\draw(4.87,-1.5)node[sommetb]{};
\draw(4.87,-0.5)node[sommetb]{};
\draw(4,0)node[sommetb]{};
\draw(4,-2)node[sommetb]{};
\draw (0,0) -- (0,-1)--(0,-2);
\draw (0,-1)--(1,-1)--(2,-1)--(3,-1)--(4,-1);
\draw (-1,-1)--(0,-1);
\draw (4,0)--(4,-1)--(4,-2);
\draw(4.87,-0.5)--(4,-1);
\draw(4.87,-1.5)--(4,-1);
\end{tikzpicture}
\quad\quad
\begin{tikzpicture}[scale=0.6]
\draw(-1,0)node[sommetb]{};
\draw(-2,-1)node[sommetb]{};
\draw(-1,-1)node[sommeti]{};
\draw(-1,-2)node[sommetb]{};
\draw(0,-1)node[sommeti]{};
\draw(1,-1)node[sommeti]{};
\draw(2,-1)node[sommeti]{};
\draw(3,-1)node[sommeti]{};
\draw(4,-1)node[sommeti]{};
\draw(4.87,-1.5)node[sommetb]{};
\draw(4.87,-0.5)node[sommetb]{};
\draw(4,0)node[sommetb]{};
\draw(4,-2)node[sommetb]{};
\draw (-1,0) -- (-1,-1)--(-1,-2);
\draw (-2,-1)--(-1,-1)--(0,-1)--(1,-1)--(2,-1)--(3,-1)--(4,-1);
\draw (-1,-1)--(0,-1);
\draw (4,0)--(4,-1)--(4,-2);
\draw(4.87,-0.5)--(4,-1);
\draw(4.87,-1.5)--(4,-1);
\end{tikzpicture}
\caption{$(H^7)_5$, $(H^7)_6$ and $(H^7)_7$}
\label{Himpair}
\end{figure}
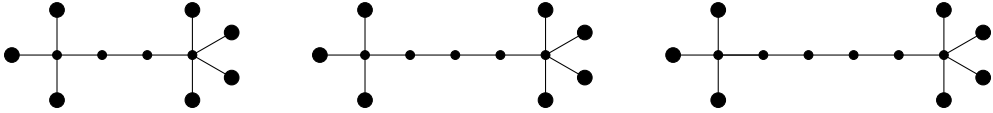

\end{defi}

\begin{lem}
\[
\sigma_1((H^b)_{d_B})=\frac{b}{\left\lfloor{\frac{b}{2}}\right\rfloor\left\lceil{\frac{b}{2}}\right\rceil(d_B-2)+b}
\]
\end{lem}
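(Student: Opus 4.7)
Set $p=\lfloor b/2\rfloor$ and $q=\lceil b/2\rceil$, and let $L$ and $R$ be the two interior vertices to which, respectively, the $p$ left and the $q$ right prongs are attached; they are joined by a path with $d_B-2$ edges. The plan is to exhibit an explicit eigenfunction attaining the stated value, and then to use the symmetry group $S_p\times S_q$ of $(H^b)_{d_B}$ to confirm that it really achieves $\sigma_1$.

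For the ansatz, I set $v\equiv a$ on the left prongs and $v\equiv -c$ on the right prongs, with the two constraints $\sum_{B}v_i=0$ and $\sum_{B}v_i^2=1$ in (\ref{carvar}) forcing $a=\sqrt{q/(bp)}$ and $c=\sqrt{p/(bq)}$, so that $a+c=\sqrt{b/(pq)}$. The intermediate vertices of the central path all have degree two, so harmonicity forces $v$ to be affine along that path with some slope $m$; harmonicity at $L$ and $R$ yields $p(v_L-a)=m$ and $q(v_R+c)=-m$, and combined with $v_R-v_L=(d_B-2)\,m$ this gives $m=-\sqrt{bpq}/[pq(d_B-2)+b]$.

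Substituting into the Rayleigh quotient (\ref{carvar}), the prongs at $L$ contribute $p(m/p)^2=m^2/p$, the prongs at $R$ contribute $m^2/q$, and the central path contributes $(d_B-2)m^2$; the sum is $m^2[pq(d_B-2)+b]/(pq)$, which simplifies to $b/[pq(d_B-2)+b]$ once one plugs in $m^2$. This gives the upper bound $\sigma_1\leq b/[pq(d_B-2)+b]$, and a direct check on the boundary shows $(\triangle v)_i=\sigma v_i$ at every $i\in B$ with $\sigma:=b/[pq(d_B-2)+b]$, so $\sigma$ is in fact a Steklov eigenvalue.

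For the matching lower bound, I exploit the automorphism group $S_p\times S_q$ permuting prongs at $L$ and at $R$. It decomposes $\mathbb{R}^B$ into the $2$-dimensional symmetric subspace of functions constant on each side (containing both the constant function and $v|_B$, yielding the eigenvalues $0$ and $\sigma$) and a $(b-2)$-dimensional complement of boundary functions with zero sum on each side. For any $\varphi$ in the complement, the extension by zero on the whole interior is harmonic, since the zero-sum condition cancels the only non-zero neighbor contribution at $L$ (respectively $R$); hence $\Lambda\varphi=\varphi$. Consequently the Steklov spectrum is $\{0,\sigma,1,\ldots,1\}$ with $1$ of multiplicity $b-2$, and since $\sigma\leq 1$ for $d_B\geq 2$ we conclude $\sigma_1=\sigma$. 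The main technical point is this harmonicity verification at $L$ and $R$, which relies precisely on the zero-sum-on-each-side structure of the complement.
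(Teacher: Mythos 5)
Your proof is correct, and the way you certify that the constructed eigenvalue is really $\sigma_1$ is genuinely different from the paper's. The paper starts from an (assumed) eigenfunction for $\sigma_1$, reads off from the eigenvalue equation at the boundary vertices (under the hypothesis $\sigma_1\neq 1$) that it must be constant on each side with values $\pm 1/\sqrt{b}$, and then minimizes the resulting one-parameter energy $Q(v_l)$; it also restricts to even $b$, remarking that the odd case is analogous but messier. You instead build the candidate eigenfunction explicitly, verify the eigenvalue equation directly, and then diagonalize the whole Dirichlet-to-Neumann operator by splitting $\mathbb{R}^B$ under the $S_p\times S_q$ symmetry into the $2$-dimensional space of side-constant functions (eigenvalues $0$ and $\sigma$) and the $(b-2)$-dimensional zero-sum-on-each-side complement, on which the zero extension is harmonic and $\Lambda$ acts as the identity. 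Your route costs the extra harmonicity check at $L$ and $R$, but it buys a uniform treatment of both parities of $b$, removes the need for the paper's side assumption $\sigma_1\neq 1$ (you see directly that $1$ occurs with multiplicity $b-2$ and that $\sigma\leq 1$), and yields the full Steklov spectrum of $(H^b)_{d_B}$ rather than only its first nonzero eigenvalue. The only caveat, immaterial for the asymptotic use of the lemma, is that your computation presumes the central path is nonempty, i.e.\ $d_B\geq 3$.
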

\begin{proof}
For the convenience of the proof, we will assume that $b$ is even. In this case, the lemma can be rewritten $\sigma_1((H^b)_{d_B})=\frac{4}{b(d_B-2)+4}$. If $b$ is odd, the proof is the same but because of the loss of symmetry the calculations are less pleasant.
 
We number the vertices of the boundary on the left side from $1$ to $\frac{b}{2}$ and those on the right side from $\frac{b}{2}+1$ to $b$. We call $l$ the vertex of the interior connected to the left boundary vertices and $r$ the one connected to the right boundary vertices.

Suppose $v$ is an eigenfunction for $\sigma_1((H^b)_{d_B})$. The coefficients of $v$ corresponding
to the $b$ vertices of the boundary are $v_1,...,v_b$ and those corresponding to $l$ and $r$ are $v_l$ and $v_r$. Assuming that $\sigma_1\not =1$, we have
\begin{align*}
&v_i-v_{l}=\sigma_1 v_i \quad\text{ if } i\leq\frac{b}{2}& \Rightarrow v_1=v_2=...=v_{\frac{b}{2}}=\frac{v_{l}}{1-\sigma_1}\\
&v_i-v_{r}=\sigma_1 v_i \quad\text{ if } \frac{b}{2}\leq i\leq b &\Rightarrow v_{\frac{b}{2}}=...=v_b=\frac{v_{r}}{1-\sigma_1}
\end{align*}
Moreover, using that $\sum_{i=1}^{b}v_i=0$ and $\sum_{i=1}^{b}v_i^2=1$, we get $v_1=...=v_{\frac{b}{2}}=\frac{1}{\sqrt{b}}$ and $v_{\frac{b}{2}}=...=v_b=-\frac{1}{\sqrt{b}}$. Note here that if $b$ were odd, the value on one side were not exactly the opposite of the value on the other side.

Since $v$ is harmonic on the interior, its energy on the path of length $d_B-2$ between the vertices $l$ and $r$ is $\frac{(v_l-v_r)^2}{d_B-2}=\frac{(2v_l)^2}{d_B-2}$.  Thus, the  total energy of $v$ on the graph, $\sum_{i\sim j}(v_i-v_j)^2$, can be written as a function depending only on the variable $v_l$
\[
Q(v_l):=\sum_{i\sim j}(v_i-v_j)^2=\frac{b}{2}(\frac{1}{\sqrt{b}}-v_l)^2 + \frac{(2v_l)^2}{d_B-2} + \frac{b}{2}(v_l - \frac{1}{\sqrt{b}})^2
\]
From equation (\ref{carvar}), we know that $v_j$ must be the point where $Q$ reaches its minimum. After calculation we find $v_l=\frac{\sqrt{b}(d_B-2)}{b(d_B-2)+4}$. Therefore
\[
\sigma_1((H^b)_{d_B})=Q(v_l)=\frac{4}{b(d_B-2)+4}
\]

\end{proof}
Eventually, we obtain by calculation
\[
\sigma_1((H^b)_{d_B})=\frac{b}{\left\lfloor{\frac{b}{2}}\right\rfloor\left\lceil{\frac{b}{2}}\right\rceil(d_B-2)+b}=\frac{b}{\left\lfloor{\frac{b}{2}}\right\rfloor\left\lceil{\frac{b}{2}}\right\rceil\cdot d_B}+O(\frac{1}{d_B^2}))
\]

\end{proof}
This completes the proof of Theorem \ref{thm2}.

\begin{note} In analogy to the Riemannian case, Faber-Krahn type inequalities are also interesting on graphs (see \cite{biyikoglu2007laplacian}). For the Steklov problem, if $b=2$, the graphs $(H^2)_{d_b}$ (which are the paths $(P_n,B_{P_n})$ in the proof of Theorem \ref{thm1}) are minimizers for $\sigma_1$ among all graphs with two vertices in the boundary and the diameter of the bondary equal to $d_B$. We already noted in example \ref{exT} that they are not unique. If $b>2$, we conjecture that $(H^b)_{d_B}$ are minimizers under the same conditions.
\end{note}

\section{Weighted graphs}

We state finally an equivalent of Theorem \ref{thm1} in the context of the Dirichlet-to-Neumann operator on weighted graphs as defined in \cite{hua2017}.

Let $(\Gamma, B)$ be a graph with boundary (we do not more assume that $\Gamma=(V,E)$ is simple).

Let $\mu$ be a symmetric weigth function given by
\begin{align*}
\mu: V \times V& \rightarrow [0,\infty)\\
(i,j)&\mapsto \mu_{ij}=\mu_{ji}
\end{align*}
with $\mu_{ij}=0$ if $\{ij\}\not \in E$. We recall that in the definition of a graph with boundary there is no edge between two vertices of the boundary.

Define the measure on $V$, $m: V\rightarrow (0,\infty)$ as follows:
\[
m_i=\sum_{i\sim j} \mu_{ij}
\]

We will denote by $(\Gamma, B,\mu)$ a weighted graph with boundary.

Recall the variational characterization of the first non-zero eigenvalue for the Dirichlet-to-Neumann operator in this setting according to the definition of Hua et al. (see \cite{hua2017} p.17)
\begin{align}
\sigma_1=\min_{v\in \mathbb{R}^{b}}\left\{\frac{\sum_{i\sim j}\mu_{ij}(v_i-v_j)^2}{\sum_{i\in B}v_i^2m_i}, \sum_{i\in B}v_i m_i=0\right\}
\label{carvarW}
\end{align}

We keep the definition of the distance between two vertices and the one of the diameter of the boundary unchanged. We define $Vol(B):=\sum_{i\in B}m_i$.

\begin{prop}
Let $(\Gamma,B,\mu)$ be a connected weighted graph with diameter of the boundary $d_{B}$. We have
\[
\sigma_1\geq\frac{c}{d_B\cdot Vol(B)}
\]
where $c:=\min_{i\sim j}\mu_{ij}$.
\end{prop}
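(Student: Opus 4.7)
The plan is to mirror the proof of Theorem \ref{thm1}, adapting each step to the weighted Rayleigh quotient in (\ref{carvarW}). First I would let $v \in \mathbb{R}^V$ be an eigenfunction associated to $\sigma_1$ (harmonically extended to the interior), let $\alpha$ be a vertex where $|v_\alpha|$ is maximal, with $v_\alpha > 0$ after replacing $v$ by $-v$ if necessary, and let $\beta$ be a vertex where $v$ attains its global minimum on $V$. The weighted discrete maximum principle, applicable since $v$ is harmonic on $I$ and every edge carries a strictly positive weight on a connected graph, forces both $\alpha$ and $\beta$ to lie in $B$; this is the key structural fact that makes $d_B$ (rather than the full diameter of $\Gamma$) the right quantity to appear.

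Next, I would extract pointwise bounds on $v_\alpha$ and $v_\beta$ from the two boundary conditions. The normalization $\sum_{i\in B} v_i^2 m_i = 1$ together with $m_i > 0$ immediately gives $v_\alpha^2 \cdot Vol(B) \geq 1$, hence $v_\alpha \geq 1/\sqrt{Vol(B)}$. The orthogonality condition $\sum_{i\in B} v_i m_i = 0$, combined with $v_\alpha > 0$ and positivity of the measure, forces at least one boundary coefficient of $v$ to be non-positive, so $v_\beta \leq 0$, and consequently $v_\alpha - v_\beta \geq v_\alpha \geq 1/\sqrt{Vol(B)}$.

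For the path step, I would pick a shortest path in $\Gamma$ of length $\ell \leq d_B$ joining $\alpha$ and $\beta$, with consecutive vertices $i_0 = \alpha, i_1, \ldots, i_\ell = \beta$. Applying Cauchy--Schwarz to the telescoping identity $v_\alpha - v_\beta = \sum_{k=0}^{\ell-1}(v_{i_k} - v_{i_{k+1}})$ gives $\sum_{k}(v_{i_k} - v_{i_{k+1}})^2 \geq (v_\alpha - v_\beta)^2/\ell$. Since every edge on the path satisfies $\mu_{i_k i_{k+1}} \geq c$ and the remaining edges contribute non-negatively to the weighted energy, the numerator of (\ref{carvarW}) evaluated at $v$ is bounded below by $c(v_\alpha - v_\beta)^2 / d_B$, and combining with the inequality of the previous paragraph yields the claimed lower bound $c/(d_B \cdot Vol(B))$.

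The only real subtlety is justifying the maximum principle in this weighted setting so that $\alpha, \beta \in B$; once that is in hand the argument is routine. It is worth noting that the simple inequality $v_\alpha - v_\beta \geq v_\alpha$ is what is needed — one could sharpen the bound on $v_\alpha - v_\beta$ using both constraints via AM--GM (which would improve the numerator by a factor up to $4$), but the stated proposition is designed to parallel Theorem \ref{thm1} and this weaker estimate already matches the claim.
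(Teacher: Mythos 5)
Your proof is correct and follows essentially the same route as the paper's: a shortest path between two boundary vertices with values of opposite sign, Cauchy--Schwarz along the path, the bound $\mu_{ij}\geq c$ on the path edges, and $\sum_{i\in B}v_i^2 m_i\leq v_\alpha^2\cdot Vol(B)$. The only (harmless) difference is your detour through a discrete maximum principle to place $\alpha$ and $\beta$ in $B$; the paper avoids this by taking $\alpha$ to maximize $|v_i|$ over $B$ from the outset and extracting $\beta\in B$ with $v_\alpha v_\beta<0$ directly from $\sum_{i\in B}v_i m_i=0$, which also sidesteps the edge case of zero-weight edges (where the maximum principle could fail, but where $c=0$ makes the claim trivial anyway).
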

\begin{note}
This result is analog to the the estimate given for the Laplacian in Theorem 3.5. of  \cite{grigoryan2009analysis}; the proof is also analog but we use the diameter of the boundary instead of using the diameter of the graph. We recall it here.
\end{note}
\begin{proof}[Proof of the proposition] Suppose $v$ is an eigenfunction achieving $\sigma_1$ in (\ref{carvarW}). Let $\alpha$ be the vertex where $|v_{\alpha}|=\max_{i\in B}|v_i|$. Since $\sum_{i\in B}v_i m_i=0$ and $m_i>0$, there exists a vertex $\beta \in B$ satisfying $v_{\alpha}v_{\beta}<0$. Let $P$ denote a shortest path in $\Gamma$ joining $\alpha$ and $\beta$. Then, by (\ref{carvarW}) and using Cauchy-Schwarz inequality, we have
\begin{align*}
\sigma_1&=\frac{\sum_{i\sim j}\mu_{ij}(v_i-v_j)^2}{\sum_{i\in B}v_i^2m_i}
\geq \frac{\sum_{\{ij\}\in P}\mu_{ij}(v_i-v_j)^2}{Vol(B)\cdot v_{\alpha}^2}\\
&\geq \frac{c\sum_{\{ij\}\in P}(v_i-v_j)^2}{Vol(B)\cdot v_{\alpha}^2}\geq\frac{c(v_{\alpha}-v_{\beta})^2}{d_B\cdot Vol(B)\cdot v_{\alpha}^2}\\
&\geq \frac{c}{d_b\cdot Vol(B)}
\end{align*}
with $c:=\min_{i\sim j}\mu_{ij}$.
\end{proof}

\bibliography{biblio}{}

\begin{thebibliography}{10}

\bibitem{biyikoglu2007laplacian}
T\"urker B\i y\i~ko\u glu, Josef Leydold, and Peter~F. Stadler.
\newblock {\em Laplacian eigenvectors of graphs}, volume 1915 of {\em Lecture
  Notes in Mathematics}.
\newblock Springer, Berlin, 2007.
\newblock Perron-Frobenius and Faber-Krahn type theorems.

\bibitem{ButlerPHD}
Steven~Kay Butler.
\newblock {\em Eigenvalues and structures of graphs}.
\newblock ProQuest LLC, Ann Arbor, MI, 2008.
\newblock Thesis (Ph.D.)--University of California, San Diego.

\bibitem{ChavIneq}
Isaac Chavel.
\newblock {\em Isoperimetric inequalities}, volume 145 of {\em Cambridge Tracts
  in Mathematics}.
\newblock Cambridge University Press, Cambridge, 2001.
\newblock Differential geometric and analytic perspectives.

\bibitem{Chung95}
Fan R.~K. Chung.
\newblock {\em Spectral graph theory}, volume~92 of {\em CBMS Regional
  Conference Series in Mathematics}.
\newblock Published for the Conference Board of the Mathematical Sciences,
  Washington, DC; by the American Mathematical Society, Providence, RI, 1997.

\bibitem{colbois2016steklov}
Bruno Colbois, Alexandre Girouard, and Binoy Raveendran.
\newblock The {S}teklov spectrum and coarse discretizations of manifolds with
  boundary.
\newblock {\em arXiv preprint arXiv:1612.07665}, 2016.

\bibitem{gpsurvey}
Alexandre Girouard and Iosif Polterovich.
\newblock Spectral geometry of the {S}teklov problem (survey article).
\newblock {\em J. Spectr. Theory}, 7(2):321--359, 2017.

\bibitem{grigoryan2009analysis}
Alexander Grigor'yan.
\newblock Analysis on graphs.
\newblock {\em Lecture Notes, University Bielefeld}, 2009.

\bibitem{hua2017}
Bobo Hua, Yan Huang, and Zuoqin Wang.
\newblock First eigenvalue estimates of {D}irichlet-to-{N}eumann operators on
  graphs.
\newblock {\em Calc. Var. Partial Differential Equations}, 56(6):Art. 178, 21,
  2017.

\bibitem{mantuano2005discretization}
Tatiana Mantuano.
\newblock Discretization of compact {R}iemannian manifolds applied to the
  spectrum of {L}aplacian.
\newblock {\em Ann. Global Anal. Geom.}, 27(1):33--46, 2005.

\bibitem{PerrinH}
H\'el\`ene Perrin.
\newblock Le probl\`eme de {S}teklov sur les graphes: d\'efinition et
  in\'egalit\'es g\'eom\'etriques pour la premi\`ere valeur propre non nulle.
\newblock Master's thesis, Universit\'e de Neuch\^atel, Neuch\^atel, Suisse,
  2017.

\end{thebibliography}
\bibliographystyle{plain}

\bigskip
\footnotesize

\textsc{Université de Neuchâtel, Institut de mathématiques, Rue Emile-Argand 11, Case postale 158, 2009 Neuchâtel, Switzerland}\par\nopagebreak
\textit{E-mail address}, \texttt{helene.perrin@unine.ch}

\end{document}